\newtheorem{theorem}{Theorem}
\newtheorem{definition}{Definition}
\newtheorem{remark}{Remark}
\newcommand{\NN}{\mathbb{N}}
\newcommand{\bsgamma}{\boldsymbol{\gamma}}
\newcommand{\bsalpha}{\boldsymbol{\alpha}}
\newcommand{\bsx}{\boldsymbol{x}}
\newcommand{\bsone}{\boldsymbol{1}}
\newcommand{\bszero}{\boldsymbol{0}}
\newcommand{\uu}{\mathfrak{u}}
\newcommand{\cP}{\mathcal{P}}
\newcommand{\cS}{\mathcal{S}}
\newcommand{\cF}{\mathcal{F}}
\newcommand{\ee}{\mathrm{e}}
\newcommand{\rd}{\,\mathrm{d}}
\newcommand{\disc}{\mathrm{disc}}
\title{Tractability properties of the weighted star discrepancy of the Halton sequence}
\author{Aicke Hinrichs\thanks{A. Hinrichs is supported by the Austrian Science Fund (FWF) Project F5513-N26, which is a part of the Special Research Program ``Quasi-Monte Carlo Methods: Theory and Applications''.}, Friedrich Pillichshammer\thanks{F. Pillichshammer is supported by the Austrian Science Fund (FWF) Project F5509-N26, which is a part of the Special Research Program ``Quasi-Monte Carlo Methods: Theory and Applications''.}, and Shu Tezuka}
\date{}
\begin{document}

\maketitle

\begin{abstract}
We study the weighted star discrepancy of the Halton sequence. In particular, we show that the Halton sequence achieves strong polynomial tractability for the weighted star discrepancy for product weights $(\gamma_j)_{j \ge 1}$ under the mildest condition on the weight sequence known so far for explicitly constructive sequences. The condition requires $\sup_{d \ge 1} \max_{\emptyset \not= \uu \subseteq [d]} \prod_{j \in \uu} (j \gamma_j) < \infty$. The same result holds for Niederreiter sequences and for other types of digital sequences. Our results are true also for the weighted unanchored discrepancy.
\end{abstract}

\centerline{\begin{minipage}[hc]{130mm}{
{\em Keywords:} weighted star discrepancy, tractability, Halton sequence, digital sequence, quasi-Monte Carlo\\
{\em MSC 2010:} 11K38, 11K45, 65C05}
\end{minipage}}

\section{Weighted star discrepancy and tractability}

The {\it local discrepancy} of an $N$-point set ${\cal P}_d$ in $[0,1)^d$ 
is defined as $$\Delta_{\cP_d}(\bsalpha):=\frac{1}{N}\sum_{\bsx \in \cP_d} \bsone_{[\bszero,\bsalpha)}(\bsx)-{\rm Volume}([\bszero,\bsalpha))$$
for all $\bsalpha=(\alpha_1,\ldots,\alpha_d) \in [0,1]^d$, where $[\bszero,\bsalpha)=[0,\alpha_1)\times [0,\alpha_2)\times \ldots \times [0,\alpha_d)$ and $\bsone_{[\bszero,\bsalpha)}$ is the characteristic function of this interval.

Let $[d]=\{1,2,\ldots,d\}$ and let $$\bsgamma=\{\gamma_{\uu}\ : \ \emptyset \not=\uu \subseteq [d]\} \subseteq [0,1]$$ be a given set of positive weights.

\begin{definition}[Weighted star discrepancy]\rm
For an $N$-point set $\cP_d$ in $[0,1)^d$ the {\em $\bsgamma$-weighted star discrepancy} is defined as
$$D_{N,{\bsgamma}}^*({\cal P}_d):=\sup_{\bsalpha\in [0,1]^d}
\max_{\emptyset\ne {\mathfrak u}\subseteq [d]} \gamma_{\mathfrak
  u}|\Delta_{\cP_d}((\bsalpha_{\mathfrak u},\bsone))|,$$
where for $\bsalpha=(\alpha_1,\ldots,\alpha_d) \in [0,1]^d$ and for
$\uu \subseteq [d]$ we put $(\bsalpha_{\uu},\bsone)=(y_1,\ldots,y_d)$
with $y_j=\alpha_j$ if $j \in \uu$ and $y_j=1$ if $j \not\in \uu$. 
\end{definition}

\begin{remark}\rm
For $\emptyset \not= \uu \subseteq [d]$ let $\cP_d(\uu)$ be the $|\uu|$-dimensional point set consisting of the projection of the points in $\cP_d$ to the components given in $\uu$. Then we have (see \cite[Lemma~1]{P17})
\begin{equation}\label{fo:wdisc1}
D_{N,{\bsgamma}}^*({\cal P}_d) = \max_{\emptyset\ne {\mathfrak u}\subseteq [d]} \gamma_{\uu} D_N^{\ast}(\cP_d(\uu)).
\end{equation}
In some papers \eqref{fo:wdisc1} is used as definition of the weighted star discrepancy, e.g., in \cite{HSW}. 
\end{remark}

If $\gamma_{[d]}=1$ and $\gamma_{\uu}=0$ for all $\uu \varsubsetneqq [d]$, or likewise, if $\gamma_{\uu}=1$ for all $\uu \subseteq [d]$, then we obtain the classical, i.e., unweighted star discrepancy $D_N^*(\cP_d)$ which we simply call star discrepancy. A popular choice for the weights are {\it product weights} given by a non-increasing sequence $(\gamma_j)_{j \ge 1}$ of positive reals, i.e., $\gamma_1\ge \gamma_2 \ge \gamma_3\ge \ldots$. Then for $\emptyset \not=\uu \subseteq [d]$ one defines 
\begin{equation}\label{def:prodweight}
\gamma_{\uu}=\prod_{j\in \uu} \gamma_j.
\end{equation}

The $\bsgamma$-weighted star discrepancy of an $N$-point set $\cP_d$ in $[0,1)^d$, introduced by Sloan and Wo\'{z}niakowski \cite{SW98}, is intimately linked to the worst-case integration error of quasi-Monte Carlo (QMC) rules of the form $$\frac{1}{N} \sum_{\bsx \in \cP_d} f(\bsx)$$ for functions $f$ from the weighted function class $\mathcal{F}_{d,1,\bsgamma}$, which is given as follows: Let $\mathcal{W}_1^{(1,1,\ldots,1)}([0,1]^d)$ be the Sobolev space of functions defined on $[0,1]^d$ that are once differentiable in each variable, and whose derivatives have finite $L_1$ norm. Then $$\cF_{d,1,\bsgamma}=\{f \in \mathcal{W}_1^{(1,1,\ldots,1)}([0,1]^d) \ : \ \|f\|_{d,1,\bsgamma}< \infty\},$$ where $$\|f\|_{d,1,\bsgamma} =|f(\bsone)| + \sum_{\emptyset \not=\uu \subseteq [d]} \frac{1}{\gamma_\uu}\left\|\frac{\partial^{|\uu|}}{\partial \bsx_{\uu}}f(\bsx_{\uu},\bsone)\right\|_{L_1}.$$ The fundamental error estimate is a weighted version of the Koksma-Hlawka inequality, see \cite[p.~65]{NW10}. In fact, the worst-case error of a QMC rule in $\cF_{d,1,\bsgamma}$ is exactly the $\bsgamma$-weighted star discrepancy of the point set used in the QMC rule:

$$\sup_{\|f\|_{d,1,\bsgamma} \le 1} \left|\int_{[0,1]^d} f(\bsx) \rd \bsx - \frac{1}{N} \sum_{\bsx \in \cP_d} f(\bsx)\right|=D_{N,{\bsgamma}}^*({\cal P}_d).$$

In the classical theory one studies the dependence of the star discrepancy on the number $N$ of elements of a point set in a fixed dimension $d$, see, e.g., the books \cite{BC,kuinie,LP14,Mat99,niesiam}. The dependence of the star discrepancy on the dimension $d$ is the subject of tractability studies. We now introduce the necessary background. For $d,N \in \NN$ the $N^{{\rm th}}$ minimal weighted star discrepancy is defined as 
$$\disc_{\bsgamma}(N,d) = \inf_{\cP \subseteq [0,1)^d \atop |\cP|=N} D_{N,\bsgamma}^{\ast}(\cP).$$
We would like to have a point set in the $d$-dimensional unit cube with weighted star discrepancy at most $\varepsilon \in (0, 1)$ and we are looking for the smallest cardinality
$N$ of a point set such that this can be achieved. For $\varepsilon \in (0, 1)$ and $d \in \NN$ we define the {\it inverse of the weighted star discrepancy} (or, in a wider context, the {\it information complexity}) as
$$N_{\min}(\varepsilon,d) := \min\{N \in \NN \ : \ \disc_{\bsgamma}(N, d) \le \varepsilon\}.$$

One is now interested in the behavior of the inverse of the weighted star discrepancy $N_{\bsgamma}(\varepsilon,d)$ for $\varepsilon \rightarrow 0$ and $d \rightarrow \infty$. This is the subject of tractability. An overview on the current state of the art of tractability theory can be found in the three volumes \cite{NW08,NW10,NW12}. Here we study the concept of polynomial tractability. Informally, polynomial tractability means that there exists an $N$-point set with $N$ depending polynomially on $d$ and $\varepsilon^{-1}$ such that the weighted star discrepancy of this point set is bounded by $\varepsilon$. 

\begin{definition}[Tractability]\rm
The weighted star discrepancy is said to be
\begin{enumerate}
\item {\it polynomially tractable}, if there exist non-negative real numbers $C, \alpha$ and $\beta$ such that
$$N_{\min}(\varepsilon,d) \le C d^{\alpha}\varepsilon^{-\beta}\ \ \ \mbox{ for all $d\in \NN$ and for all $\varepsilon \in (0,1)$.}$$ %The infima over all $\alpha,\beta>0$0 such that (1) holds are called the s-exponent and the ?-exponent, respectively, of polynomial tractability.
\item {\it strongly polynomially tractable}, if there exist non-negative real numbers $C$ and $\beta$ such that
\begin{equation}\label{defspt}
N_{\min}(\varepsilon,d) \le C \varepsilon^{-\beta}\ \ \ \mbox{ for all $d\in \NN$ and for all $\varepsilon \in (0,1)$.}
\end{equation}
The infimum over all $\beta > 0$ such that \eqref{defspt} holds is called the $\varepsilon$-exponent of strong polynomial tractability.
\end{enumerate}
\end{definition}

In the following we give a brief survey about known results on tractability of the weighted star discrepancy, where we will distinguish between ``existence results'' and ``constructive results''. 

Before we do so, we shall specify more exactly what we understand by the intuitive notion of   ``constructive result''. We are aware that most of the following existence results can be made ``constructive'' in the sense that one can compute suitable point sets with finitely many arithmetic operations. This can be achieved by means of diverse derandomization methods. However, in most of these cases the computational effort to find such point sets explicitly grows exponential in $d$. In this sense we understand by a ``constructive result'' that the corresponding point set can be found or constructed by a polynomial-time algorithm in $d$ and in $\varepsilon^{-1}$. We refer also to \cite[Section~4.3]{DGS2005} for a discussion in this direction.

\paragraph{Existence results.} From Heinrich, Novak, Wasilkowski, and Wo\'{z}niakowski~\cite{hnww} it is known that there exists an absolute constant $C>0$ such that  
\begin{equation}\label{HNWW}
\disc_{\bsone}(N,d) \le C \sqrt{\frac{d}{N}} \ \ \ \mbox{for all $d,N \in \NN$,}
\end{equation}
where $\bsone$ is the constant sequence $(1)_{j \ge 1}$ (Aistleitner~\cite{Aist} showed that one can choose $C=10$). Hence the classical star discrepancy is polynomially tractable with $\varepsilon$-exponent at most 2. From Hinrichs \cite{hin} we know that the inverse of the classical star discrepancy is at least $c d \varepsilon^{-1}$ with an absolute constant $c>0$ for all $\varepsilon \in (0,\varepsilon_0]$ and $d \in \NN$. From these results it follows that the classical star discrepancy cannot be strongly polynomially tractable. For a derandomization of the result \eqref{HNWW} see \cite{DGS2005}.

For the weighted star discrepancy we know from \cite{HPS2008} that there exists an absolute constant $C>0$ such that 
\begin{equation}\label{bdx1}
\disc_{\bsgamma}(N,d) \le C\,\frac{1+\sqrt{\log d}}{\sqrt{N}}  \max_{\emptyset \not=\uu \subseteq [d]} \gamma_{\uu} \sqrt{|\uu|}  \ \ \ \mbox{for all $d,N \in \NN$.}
\end{equation}
The proof of this result is based on \eqref{HNWW}. If $\sup_{d \ge 1}\max_{\emptyset \not=\uu \subseteq [d]} \gamma_{\uu} \sqrt{|\uu|} < \infty$, then \eqref{bdx1} implies polynomial tractability with $\varepsilon$-exponent 2, see \cite{HPS2008} for details. A slightly improved and numerically explicit version of \eqref{bdx1} can be found in \cite[Theorem~1]{Aist2}.

Hickernell, Sloan, and Wasilkowski  \cite{HSW} considered product weights $(\gamma_j)_{j=1}^{\infty}$ and showed that if there exists some $a>0$ such that
\begin{equation}\label{cond8}
\sum_{j=1}^{\infty} \gamma_j^a < \infty, 
\end{equation}
then for every $\delta>0$ there exists some $C(\delta)>0$ such that 
$$\disc_{\bsgamma}(N,d) \le \frac{C(\delta)}{N^{1/2-\delta}} \ \ \ \mbox{for all $d,N \in \NN$.}$$
A typical instance for weights satisfying condition \eqref{cond8} is $\gamma_k=O(k^{-\tau})$ for an arbitrary small number $\tau>0$. 

This result was further improved by Aistleitner \cite{Aist2} who showed that for product weights satisfying the condition 
\begin{equation}\label{condA}
\sum_{j=1}^\infty {\rm e}^{-c \gamma_j^{-2}} < \infty
\end{equation}
for some $c > 0$ there is a constant $C_{\bsgamma}>0$ such that 
$$\disc_{\bsgamma}(N,d) \le \frac{C_{\bsgamma}}{\sqrt{N}} \ \ \ \mbox{for all $d,N \in \NN$.}$$ 
Consequently, the weighted star discrepancy for such weights is strongly polynomially tractable, with $\varepsilon$-exponent at most 2. A typical sequence $(\gamma_j)_{j \ge 1}$ satisfying condition \eqref{condA} is $\gamma_j=\widehat{c}/\sqrt{\log j}$ for some $\widehat{c}>0$. Currently, this is the mildest condition on the weights in order to achieve strong polynomial tractability for the weighted star discrepancy.

\paragraph{Constructive results.} In \cite{DLP2006} the authors considered digital nets and showed that for every prime number $p$, every $m \in \NN$ and for given product weights $(\gamma_j)_{j \ge 1}$ with 
\begin{equation}\label{sumwei}
\sum_{j = 1}^{\infty} \gamma_j < \infty
\end{equation}
one can construct component-by-component an $N$-point set $\cP$ with $N=p^m$ in $[0,1)^d$ such that for every $\delta >0$ there exists some $C_{\bsgamma,\delta}>0$ with the property $$D_{N,\bsgamma}^{\ast}(\cP)\le \frac{C_{\bsgamma,\delta}}{p^{m(1-\delta)}}.$$ This result implies that the weighted star discrepancy of the CBC-constructed point sets achieves strong polynomial tractability with $\varepsilon$-exponent equal to 1, as long as the weights $\gamma_j$ are summable. The so-called fast CBC algorithm to construct a suitable generating vector of the digital point set $\cP$ requires $O(d N \log N)$ arithmetic operations. This implies that $\cP$ can be found by a polynomial-time algorithm in $d$ and in $\varepsilon^{-1}$. Because of the CBC-construction the point set $\cP$ depends on the weights $\bsgamma$. See also \cite{DLP2006,dnp06,DP10,HPS2008} for more details.

The summability condition \eqref{sumwei} on the weights appears also in \cite{DP14} where so-called Korobov $p$-sets are studied. If \eqref{sumwei} holds, the weighted star discrepancy of these $p$-sets also achieves strong polynomial tractability but with the weaker $\varepsilon$-exponent 2. The advantage here is that the point sets are really explicit and do not need to be constructed componentwise. This also means that these point sets are universal in the sense that they are independent of the weights $\bsgamma$.

Beside the results for $p$-sets the following constructive and universal results are known: Wang \cite[Lemma~1]{wang2} proved that for the initial $N$ elements of a Niederreiter sequence $\cS_{d}$ in prime-power base $q$ (see \cite{DP10,niesiam} for a definition) for every $\uu \subseteq [d]$ it holds that $$D_N^{\ast}(\cS_{d}(\uu)) \le \frac{1}{N} \prod_{j \in \uu}(C j \log(j+q) \log(qN)),$$ where $C>0$ is an absolute constant which is independent of $\uu$ and $d$. Similar results can be shown for Sobol' sequences and for the Halton sequence (see \cite{wang1,wang2}). From this result one obtains
$$D_{N,\bsgamma}^{\ast}(\cS_{d}) \le \frac{1}{N} \max_{\emptyset \not= \uu \subseteq [d]} \gamma_{\uu} \prod_{j \in \uu}  (C j \log(j+q) \log(qN)).$$ In the case of product weights this implies that the weighted star discrepancy of the Niederreiter sequence achieves strong polynomial tractability with $\varepsilon$-exponent 1 whenever the weights satisfy $\sum_{j \ge 1} (j \log j) \gamma_j  < \infty$. The same result can be shown for Sobol' sequences and for the Halton sequence (see Section~\ref{secHalt}). We will improve these results.

In this paper we study the Halton sequence in more detail and present the currently mildest condition on product weights under which a constructive and universal result for strong polynomial tractability for the weighted star discrepancy holds. Similar results hold for special kinds of digital sequences, e.g., Niederreiter sequences, Xing-Niederreiter sequences, Hofer-Niederreiter sequences and Sobol' sequences. The results are presented and proved in the following section. A brief discussion of the obtained discrepancy bounds is given in Section~\ref{discussion}.

\section{The Halton sequence}\label{secHalt}

Let $\mathcal{H}_{b_1,\ldots,b_d}$ be the $d$-dimensional Halton sequence in pairwise coprime bases $b_1,\ldots,b_d$. Throughout this paper we assume that $b_1,b_2,b_3,\ldots$ are the prime numbers in increasing order. The $n^{{\rm th}}$ element of the Halton sequence is given by $$\bsx_n=(\varphi_{b_1}(n),\ldots,\varphi_{b_d}(n))$$ where, for some integer $b>1$ and $n$ with $b$-adic expansion $n=n_0+n_1 b+ n_2 b^2+\cdots$ we define $$\varphi_b(n)=\frac{n_0}{b}+\frac{n_1}{b^2}+\frac{n_2}{b^3}+\cdots.$$

\begin{remark}\rm
Note that the first $N$ elements of the $d$-dimensional Halton sequence can be computed in $O(dN)$ arithmetic operations. This follows from the observation that, given the first $b^k$ elements $x_0,x_1,\ldots,x_{b^k-1}$ of the $b$-adic van der Corput sequence $\mathcal{H}_b$ (i.e. the 1-dimensional Halton sequence in base $b$), one obtains the initial $b^{k+1}$ elements of $\mathcal{H}_b$ by computing  
$$\begin{array}{llll}
x_0, & x_1, & \ldots & x_{b^k-1}, \\
x_0+\frac{1}{b^{k+1}}, & x_1+\frac{1}{b^{k+1}}, & \ldots & x_{b^k-1}+\frac{1}{b^{k+1}},\\
x_0+\frac{2}{b^{k+1}}, & x_1+\frac{2}{b^{k+1}}, & \ldots & x_{b^k-1}+\frac{2}{b^{k+1}},\\
\multicolumn{4}{l}\dotfill\\
x_0+\frac{b-1}{b^{k+1}}, & x_1+\frac{b-1}{b^{k+1}}, & \ldots & x_{b^k-1}+\frac{b-1}{b^{k+1}}.
\end{array}$$
\end{remark}

The weighted star discrepancy of the Halton sequence for finite order weights has been studied in \cite{T16}. Here we consider product weights.

It follows from \cite[Theorem~3.6]{niesiam}, see also \cite[Eq. (20)]{wang1} that for $\emptyset \not=\uu \subseteq [d]$ we have $$D_N^{\ast}(\mathcal{H}_{b_1,\ldots,b_d}(\uu)) \le \frac{1}{N} \prod_{j \in \uu} \left(\frac{b_j -1}{2 \log b_j} \log N + \frac{b_j+3}{2}\right).$$ From this result it follows that the weighted star discrepancy of the Halton sequence achieves strong polynomial tractability if $$\sum_{j \ge 1}  (j \log j) \gamma_j < \infty.$$ 
 
It is the aim of this paper to improve this result. Thereby we give the currently best ``constructive'' proof for strong polynomial tractability of the weighted star discrepancy. This means that we give the mildest condition on product weights such that the weighted star discrepancy of an explicit point set achieves strong polynomial tractability.

\begin{theorem}\label{thm1}
Let $b_1,b_2,b_3,\ldots$ be the prime numbers in increasing order. Then we have:
\begin{itemize}
\item If $\sum_{j \ge 1} j \gamma_j < \infty$, then the weighted star discrepancy of the Halton sequence $\mathcal{H}_{b_1,\ldots,b_d}$ achieves strong polynomial tractability with $\varepsilon$-exponent 1, which is optimal. 
\item If $\sup_{d \ge 1} \max_{\emptyset \not= \uu \subseteq [d]} \prod_{j \in \uu} (j \gamma_j) < \infty$, then the weighted star discrepancy of the Halton sequence $\mathcal{H}_{b_1,\ldots,b_d}$ achieves strong polynomial tractability with $\varepsilon$-exponent at most 2. 
\end{itemize}
\end{theorem}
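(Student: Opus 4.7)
The plan is to interpolate between the projection-wise bound recalled above,
\[
D_N^*(\mathcal{H}_{b_1,\ldots,b_d}(\uu)) \le \frac{1}{N}\prod_{j \in \uu}(A_j \log N + B_j),\quad A_j = \tfrac{b_j-1}{2\log b_j},\ B_j = \tfrac{b_j+3}{2},
\]
and the trivial estimate $D_N^*(\mathcal{H}(\uu)) \le 1$. By the prime number theorem $b_j = O(j \log j)$, hence $A_j \log N + B_j \le c\,j \log(\ee N j)$ for an absolute constant $c$; for $\uu \subseteq [N]$ with $k = |\uu|$ this simplifies to $A_j \log N + B_j \le 2c\,j \log(\ee N)$. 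Since $\prod_{j \in \uu} j \ge k!$, the trivial bound yields $\gamma_\uu \le \prod_{j \in \uu}(j\gamma_j)/k!$, and combining produces the master estimate
\[
\gamma_\uu D_N^*(\mathcal{H}(\uu)) \le \prod_{j \in \uu}(j\gamma_j) \cdot \min\!\left(\frac{(2c\log(\ee N))^k}{N},\ \frac{1}{k!}\right).
\]
For $\uu$ with some index exceeding $N$, the single-index bound $\gamma_{\max\uu} \le S/N$ (respectively $M/N$) handles those cases at rate $O(1/N)$, which is absorbed into the rates below.

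For the second bullet, I would insert $\prod_{j \in \uu}(j\gamma_j) \le M$ directly, giving $\gamma_\uu D_N^*(\mathcal{H}(\uu)) \le M \cdot \min(\ldots)$. The two terms inside the minimum balance at $k^* \sim \log N/(2\log\log N)$, where Stirling's approximation gives $1/k^*! \sim N^{-1/2+o(1)}$; hence $D_{N,\bsgamma}^*(\mathcal{H}_d) = O(N^{-1/2+o(1)})$ and the $\varepsilon$-exponent is at most $2$.

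For the first bullet, the AM--GM inequality extracts the decisive sharpening $\prod_{j \in \uu}(j\gamma_j) \le (S/k)^k$, where $S := \sum_j j\gamma_j$, upgrading the estimate to
\[
\gamma_\uu D_N^*(\mathcal{H}(\uu)) \le \left(\tfrac{S}{k}\right)^{k} \cdot \min\!\left(\frac{(2c\log(\ee N))^k}{N},\ \frac{1}{k!}\right).
\]
The balance point is again $k^* \sim \log N/(2\log\log N)$, but now a further Stirling computation gives $(S/k^*)^{k^*} \sim N^{-1/2+o(1)}$; multiplying the two factors of $N^{-1/2+o(1)}$ produces a bound of the shape $C_{\bsgamma}(\log N)^{c_{\bsgamma}}/N$, and hence $\varepsilon$-exponent at most $1$. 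Optimality of this exponent follows from considering only the one-dimensional projection: $\mathcal{H}(\{1\})$ is the van der Corput sequence in base $b_1=2$, which satisfies Schmidt's lower bound $D_N^* \ge c_0 \log N/N$ for infinitely many $N$; this forces $D_{N,\bsgamma}^*(\mathcal{H}_d) \ge c_0 \gamma_1 \log N/N$ and therefore $N_{\min}(\varepsilon,d) \ge c'/\varepsilon$, so the $\varepsilon$-exponent is at least $1$.

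The main obstacle will be making the Stirling-based balance precise: one must track the logarithmic corrections simultaneously in $(S/k)^k$, $(2c\log(\ee N))^k/N$ and $1/k!$ through the critical scale $k^* \sim \log N/(2\log\log N)$, and it is exactly this careful bookkeeping that secures the exponent $1$ in the first bullet rather than merely $2$.
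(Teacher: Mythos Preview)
Your proposal is essentially correct, and for the second bullet it matches the paper's approach almost exactly: interpolate between the Halton-type projection bound and the trivial bound $D_N^*\le 1$, use $\prod_{j\in\uu} j\ge k!$, and locate the balance $k^*\sim \log N/(2\log\log N)$ by Stirling. The paper formalises this balance via Lambert's $W$-function to obtain an explicit exponent $\delta^*(N)\to 1/2$, but your asymptotic determination of $k^*$ and of $1/k^*!\sim N^{-1/2+o(1)}$ gives the same conclusion.

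For the first bullet you take a genuinely different route. The paper does \emph{not} interpolate with the trivial bound here at all: it uses the cleaner Halton estimate $D_N^*(\mathcal H(\uu))\le N^{-1}\prod_{j\in\uu}(6j\log N)$ (based on $(3b_j-2)/\log b_j\le 6j$, which avoids the $j\log j$ term coming from your $B_j$), then replaces the maximum by the sum to obtain
\[
D_{N,\bsgamma}^*(\mathcal H_d)\le \frac{1}{N}\left(-1+\prod_{j=1}^d(1+6j\gamma_j\log N)\right),
\]
and invokes a known lemma (Hickernell--Niederreiter) saying that if $\sum_j j\gamma_j<\infty$ then this product is $\le c_\delta N^\delta$ for every $\delta>0$. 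This is shorter and more direct than your AM--GM plus interpolation argument. Your approach, by contrast, extracts $\prod_{j\in\uu}(j\gamma_j)\le (S/k)^k$ and multiplies it against the same $\min$ used in the second bullet; the analysis does go through, since at $k^*$ one has both $1/k^*!=N^{-1/2+o(1)}$ and $(S/k^*)^{k^*}=N^{-1/2+o(1)}$. What your method buys is that it reuses a single master estimate for both bullets; what the paper's method buys is that it is a two-line argument given the cited lemma, and it does not need the case split for indices $j>N$ that your choice of starting bound forces.

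One inaccuracy to flag: your claimed form $C_{\bsgamma}(\log N)^{c_{\bsgamma}}/N$ is too strong. At the balance point the residual exponent is of order $\tfrac{\log N\,\log\log\log N}{\log\log N}$, which beats any fixed power of $N$ but is worse than any fixed power of $\log N$. This does not affect the conclusion that the $\varepsilon$-exponent is $1$, but the polylog shape does not follow from your argument. Your optimality argument via the one-dimensional projection is fine (indeed the trivial bound $D_N^*\ge 1/(2N)$ already suffices; Schmidt's $\log N/N$ is not needed).
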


\begin{remark}\rm
Note that the second item of Theorem~\ref{thm1} tells us that weights $\gamma_j=\frac{1}{j}$ already guarantee strong polynomial tractability. Although this is still much more demanding than Aistleitner's condition \eqref{condA}, this result is the currently mildest weight condition for a constructive proof of strong polynomial tractability of the weighted star discrepancy. Furthermore, it is the first ``constructive'' result which does not require that the weights are summable in order to achieve strong polynomial tractability. 
\end{remark}

\begin{proof}[Proof of Theorem~\ref{thm1}]
For the proof we use Halton's bound on the star discrepancy of $\mathcal{H}_{b_1,\ldots,b_d}$ from \cite{H60} which implies that for $\emptyset \not=\uu \subseteq [d]$ we have $$D_N^{\ast}(\mathcal{H}_{b_1,\ldots,b_d}(\uu)) \le \frac{(\log N)^{|\uu|}}{N} \prod_{j \in \uu} \frac{3 b_j -2}{\log b_j}.$$ Since $b_j$ is the $j^{{\rm th}}$ largest prime number we obtain (see \cite{T16}) $$\frac{3 b_j-2}{\log b_j} \le 6 j,$$ and hence 
\begin{equation}\label{discPu}
D_N^{\ast}(\mathcal{H}_{b_1,\ldots,b_d}(\uu)) \le \frac{(\log N)^{|\uu|}}{N} \prod_{j \in \uu} 6 j.
\end{equation}
This implies that
\begin{eqnarray}\label{bd_disc_direct}
D_{N,\bsgamma}^{\ast}(\mathcal{H}_{b_1,\ldots,b_d}) & \le & \frac{1}{N} \max_{\emptyset \not= \uu \subseteq [d]} \prod_{j \in \uu} (6 j \gamma_j \log N)\\
& \le &   \frac{1}{N} \sum_{\emptyset \not= \uu \subseteq [d]} \prod_{j \in \uu} (6 j \gamma_j \log N) \label{maxsumchange}\\
& = & \frac{1}{N} \left(-1+ \prod_{j=1}^d (1+6 j \gamma_j \log N)\right).\nonumber
\end{eqnarray}
Assume that $$\sum_{j \ge 1} j \gamma_j < \infty.$$ Then, using an argumentation presented in \cite[Lemma~3]{HN} (see also \cite[p. 222]{DP10}), it follows that for every $\delta>0$ there exists a $c_{\delta}>0$ such that $$\prod_{j=1}^d (1+6 j \gamma_j \log N) < c_{\delta} N^{\delta}.$$ This implies 
\begin{equation*}
D_{N,\bsgamma}^{\ast}(\mathcal{H}_{b_1,\ldots,b_d}) \le \frac{c_{\delta}}{N^{1-\delta}}.
\end{equation*}
Now, if $N \ge \lceil (c_{\delta} \varepsilon^{-1})^{1/(1-\delta)} \rceil$ we obtain $D_{N,\bsgamma}^{\ast}(\mathcal{H}_{b_1,\ldots,b_d}) \le \varepsilon$ and this implies that the weighted star discrepancy of the Halton sequence achieves strong polynomial tractability. Since $\delta>0$ can be chosen arbitrary closely to zero we find that the $\varepsilon$-exponent equals 1. Thus, the first item is shown. 

Now we prove the second item: we use the trivial fact that the star discrepancy is always bounded by 1 which leads to an improved version of the estimate \eqref{bd_disc_direct}. We have
\begin{eqnarray*}
D_{N,\bsgamma}^{\ast}(\mathcal{H}_{b_1,\ldots,b_d}) & \le & \max_{\emptyset \not= \uu \subseteq [d]} \prod_{j \in \uu} \gamma_j \min\left\{1, \frac{(\log N)^{|\uu|}}{N} \prod_{j \in \uu}(6 j)\right\}\\
& = & \max_{\emptyset \not= \uu \subseteq [d]} \prod_{j \in \uu} (j \gamma_j) \min\left\{\prod_{j \in \uu} \frac{1}{j}, \frac{(6 \log N)^{|\uu|}}{N}\right\}.
\end{eqnarray*}
For $\emptyset \not=\uu \subseteq [d]$ with $|\uu|=\ell$ we have $$\min\left\{\prod_{j \in \uu} \frac{1}{j}, \frac{(6 \log N)^{|\uu|}}{N}\right\} \le \min\left\{\frac{1}{\ell!}, \frac{(6 \log N)^{\ell}}{N}\right\}\le \min\left\{\left(\frac{\ee}{\ell}\right)^{\ell}, \frac{(6 \log N)^{\ell}}{N}\right\}$$ where we used Stirling's formula for the last inequality. 

We consider $\ell$ as a real variable and determine $\ell$ for which 
\begin{equation}\label{eq1}
\left(\frac{\ee}{\ell}\right)^{\ell} = \frac{(6 \log N)^{\ell}}{N}.
\end{equation}
To this end we use Lambert's $W$-function $W(z)$ which is the inverse function of the function $x \mapsto x \ee^x$ and which satisfies $W(z) \ee^{W(z)}=z$. With the help of $W$ we can solve the equation $(ax)^x =z$ for given $a,z$. The solution is given by $x=\frac{\log z}{W(\log z^a)}$, since for this choice and by using the equation $$\ee^{W(\log z^a)}=\frac{\log z^a}{W(\log z^a)}=a x$$ we obtain $$(a x)^x = \left(\ee^{W(\log z^a)}\right)^{\frac{\log z}{W(\log z^a)}}=\ee^{\log z}=z.$$ 

For our specific equation \eqref{eq1} we therefore obtain the solution $$\ell^*=\frac{\log N}{W\left(\frac{6}{\ee} (\log N)^2\right)}.$$

Hence, 
\begin{eqnarray}
\min\left\{\left(\frac{\ee}{\ell}\right)^{\ell}, \frac{(6 \log N)^{\ell}}{N}\right\} & \le & \frac{1}{N} (6 \log N)^{\frac{\log N}{W\left(\frac{6}{\ee} (\log N)^2\right)}}\nonumber\\ 
&= &\frac{1}{N} \exp\left(\frac{\log N \log \log N^6}{W\left(\frac{6}{\ee} (\log N)^2\right)}\right)\nonumber\\
& = & \frac{1}{N} N^{\frac{\log \log N^6}{W\left(\frac{6}{\ee} (\log N)^2\right)}}\label{expo1}
\end{eqnarray}
We have $$W(x) \approx \log x - \log \log x + \frac{\log \log x}{\log x} \ \ \ \mbox{ and }\ \ \ W(x)\ge \log x - \log \log x \ \ \mbox{for $x \ge {\rm e}$}.$$ This yields  $$W\left(\frac{6}{\ee} (\log N)^2\right) \ge \log\left(\frac{6}{\ee} (\log N)^2\right)-\log\log \left(\frac{6}{\ee} (\log N)^2\right).$$
Hence, for the exponent in \eqref{expo1} we have 
\begin{eqnarray*}
\frac{\log \log N^6}{W\left(\frac{6}{\ee} (\log N)^2\right)} \le \frac{\log \log N +\log 6}{2 \log\log N+\log 6 -1-\log(2 \log\log N+\log 6 -1)}=:\delta^*(N).
\end{eqnarray*}
This implies that $$D_{N,\bsgamma}^{\ast}(\mathcal{H}_{b_1,\ldots,b_d}) \le  \frac{1}{N^{1-\delta^*(N)}} \max_{\emptyset \not= \uu \subseteq [d]} \prod_{j \in \uu} (j \gamma_j).$$
Note that $\lim_{N \rightarrow \infty}\delta^*(N)=\frac{1}{2}$ and hence, for every $\delta>0$ there exists some $C_{\delta}>0$ such that 
\begin{equation}\label{bd_di_ha2}
D_{N,\bsgamma}^{\ast}(\mathcal{H}_{b_1,\ldots,b_d}) \le \frac{C_{\delta}}{N^{1/2-\delta}} \max_{\emptyset \not= \uu \subseteq [d]} \prod_{j \in \uu} (j \gamma_j).
\end{equation}
If $$\sup_{d \ge 1} \max_{\emptyset \not= \uu \subseteq [d]} \prod_{j \in \uu} (j \gamma_j)< \infty$$ we obtain as above that the weighted star discrepancy of the Halton sequence achieves strong polynomial tractability. Here we can only guarantee a $\varepsilon$-exponent of at most 2.
\end{proof}

\begin{remark}\rm
The results from Theorem~\ref{thm1} hold true for all sequences that satisfy a star discrepancy bound of the form \eqref{discPu} for all projections of the sequence. Examples are the Niederreiter sequences $\cS_b$ in base $b$ which satisfy $$D_N^{\ast}(\cS_b(\uu)) \le \frac{(\log N)^{|\uu|}}{N} \left( \frac{4 b^2}{\log b}\right)^{|\uu|} \prod_{j \in \uu} j,$$ see \cite[Corollary~2]{T16}, or Xing-Niederreiter sequences and Hofer-Niederreiter sequences 
in base $b$ and genus $g$ which satisfy $$D_N^{\ast}(\cS_b(\uu)) \le b^g \frac{(\log N)^{|\uu|}}{N} C^{|\uu|} \prod_{j \in \uu} j$$ for some $C>1$, see \cite[Corollary~5]{T16}.

For the Sobol' sequence $\cS^{{\rm Sob}}$ in base 2 we have $$D_N^{\ast}(\cS^{{\rm Sob}}(\uu)) \le  \frac{(\log N)^{|\uu|}}{N} C^{|\uu|} \prod_{j \in \uu} (j \log_2 \log_2(j+3)),$$ with some $C>1$, see \cite[Corollary~3]{T16}. With the same methods as above one can show that the weighted star discrepancy of the Sobol' sequence achieves strong polynomial tractability if $$\sum_{j\ge 1} (j \log_2 \log_2(j+3) \gamma_j)< \infty.$$ The $\varepsilon$-exponent is again 1 in this case. Furthermore, if $$\sup_{d \ge 1} \max_{\emptyset \not= \uu \subseteq [d]} \prod_{j \in \uu} (j \log_2\log_2(j+3) \gamma_j) < \infty,$$ then the weighted star discrepancy of the Sobol' sequence achieves strong polynomial tractability with $\varepsilon$-exponent at most 2. 
\end{remark}

\begin{remark}\rm
Note that Theorem~\ref{thm1} can be generalized to the weighted
unanchored discrepancy discussed in \cite{HSW}, which is defined as
$$
D_{N,{\bsgamma}}({\cal P}_d):=\max_{\emptyset\ne {\mathfrak u}\subseteq
[d]} \gamma_{\mathfrak  u} D_{N}({\cal P}_d(\uu)),
$$
where
$D_{N}({\cal P}_d(\uu))$ is the unanchored discrepancy
of the $|\uu|$-dimensional  point set ${\cal P}_d(\uu)$.
Since  
$$
D_{N}({\cal P}_d(\uu)) \le 2^{|\uu|}D^*_{N}({\cal P}_d(\uu)),
$$
we have for the Halton sequence,
$$
D_N(\mathcal{H}_{b_1,\ldots,b_d}(\uu)) \le \frac{(\log N)^{|\uu|}}{N} \prod_{j \in \uu} 12 j.
$$
Therefore, with all parts of the proof remaining unchanged except for
the constant 6 replaced by 12, we see that Theorem~\ref{thm1} still
holds for the unanchored case.
\end{remark}

\section{Discussion of the results}\label{discussion}

We have shown that, formally, the Halton sequence achieves strong polynomial tractability for the weighted star discrepancy for sufficiently fast decaying weights. If $\sum_{j \ge 1} j \gamma_j < \infty$ we even obtain the optimal $\varepsilon$-exponent 1, if $\sup_{d \ge 1} \max_{\emptyset \not= \uu \subseteq [d]} \prod_{j \in \uu} (j \gamma_j) < \infty$ we still obtain an $\varepsilon$-exponent of at most $2$. This seems to be excellent. However, the problem with our seemingly excellent bounds is that in some cases the involved constants are astronomically large, especially when the weights do not decrease very fast.

Consider, for example, weights of the form $\gamma_j=1/j^{1+\alpha}$ with $\alpha>1$ which guarantees that $\sum_{j \ge 1} j \gamma_j < \infty$. Then we showed $$D_{N,\bsgamma}^{\ast}
(\mathcal{H}_{b_1,\ldots,b_d}) \le \frac{c_{\delta}}{N^{1-\delta}}.$$ 
If we follow the proof of \cite[Lemma~3]{HN} it turns out, that $$c_{\delta}=\left(1+\frac{1}{\sigma_w}\right)^w \ \ \ \mbox{ where }\ \ \ \sigma_w=6 \sum\limits_{j=w+1}^{\infty} j \gamma_j = 6 \sum\limits_{j=w+1}^{\infty} \frac{1}{j^{\alpha}},$$ and where $w$ is as large such that 
\begin{equation}\label{cond1}
\sigma_w \le \frac{\delta}{1+\sigma_0}.
\end{equation}
By elementary estimates of $\sum_{j \ge 1} j^{-\alpha}$ one obtains that \eqref{cond1} implies $$w \ge -1+\left(\frac{6}{(\alpha-1) \delta} \left(1+\frac{6}{\alpha-1}\right)\right)^{\frac{1}{\alpha-1}}\ \ \mbox{ and  }\ \ 
c_{\delta} \ge \left(1+\frac{(\alpha-1) w^{\alpha-1}}{6}\right)^w.$$ For example, if $\delta=0.1$ and $\alpha=1.1$, then $w \ge -1+ (600 (1+60))^{10} =4.31331 \times 10^{45}$ and it is even impossible to compute the lower bound on $c_{\delta}$. In the following table we collect some values for $c_{\delta}$:

\[
\begin{array}{c||c|c|c||l}
\delta &  0.9  &  0.5  & 0.1  & \\
\hline\hline
c_{\delta} & 4 \times 10^{35714}     & 10^{139333}       &  10^{5152589}     & \alpha=1.5\\
\hline
c_{\delta} & 5 \times 10^{42}      &  1,6 \times 10^{97}        &  1,7 \times 10^{775}      & \alpha=2\\
\hline
c_{\delta} & 24.5      &  1129.5       &  1.7 \times 10^{15}      & \alpha=3\\
\hline
c_{\delta} & 1.29      &  2.5       &  1922      & \alpha=4\\
\end{array}
\]

One might think that the estimate of the maximum by a sum in \eqref{maxsumchange} may be the reason for these poor values for $c_{\delta}$. This is not the case.  We show that for weights $\gamma_{j}=1/j^{1+\alpha}$ with $\alpha>1$ we can avoid the replacement of the maximum in \eqref{bd_disc_direct} by the sum over all $\emptyset\not=\uu \subseteq [d]$ in \eqref{maxsumchange}. To this end we use \eqref{bd_disc_direct} and the shorthand notation $x:=(6 \log N)^{1/\alpha}$ to obtain
\begin{eqnarray}\label{bd_comp}
D_{N,\bsgamma}^{\ast}(\mathcal{H}_{b_1,\ldots,b_d}) \le \frac{1}{N} \max_{\emptyset \not= \uu \subseteq [d]}  \prod_{j \in \uu} \frac{6 \log N}{j^{\alpha}}=\frac{1}{N} \max_{k=1,\ldots,d} \frac{(6 \log N)^k}{(k!)^{\alpha}} =\frac{1}{N} \left(\max_{k=1,\ldots,d} \frac{x^k}{k!} \right)^{\alpha}.
\end{eqnarray}

We have $$\frac{\frac{x^k}{k!}}{\frac{x^{k+1}}{(k+1)!}}=\frac{k+1}{x} \left\{ 
\begin{array}{ll}
>1 & \mbox{if $k>x-1$},\\
=1 & \mbox{if $k=x-1$},\\
<1 & \mbox{if $k<x-1$}.
\end{array}\right.$$

Hence, writing $y=\lceil x \rceil \in \NN$ and using Stirling's formula we obtain
\begin{eqnarray*}
\max_{k=1,\ldots,d} \frac{x^k}{k!} & \le & \max_{k\ge 1} \frac{x^k}{k!}=\frac{x^{\lceil x \rceil -1}}{(\lceil x \rceil -1)!} = \frac{\lceil x \rceil}{x} \frac{x^{\lceil x \rceil}}{\lceil x \rceil !}\le 2 \frac{y^y}{y!} \le 2 \frac{{\rm e}^y}{\sqrt{2 \pi y}}.
\end{eqnarray*}
Thus we have
\begin{eqnarray*}
D_{N,\bsgamma}^{\ast}(\mathcal{H}_{b_1,\ldots,b_d}) & \le & \frac{2^{\alpha}}{N} \left(\frac{1}{2 \pi (6 \log N)^{1/\alpha}}\right)^{\alpha/2}{\rm e}^{((6 \log N)^{1/\alpha}+1)\alpha}\\
& = & \left(\frac{2 {\rm e}^2}{\pi}\right)^{\alpha/2} \frac{1}{\sqrt{6 \log N}} {\rm e}^{\alpha(6 \log N)^{1/\alpha} - \log N}\\
& = & \left(\frac{2 {\rm e}^2}{\pi}\right)^{\alpha/2} \frac{1}{\sqrt{6 \log N}} \frac{1}{N^{1-\frac{\alpha (6 \log N)^{1/\alpha}}{\log N}}}.
\end{eqnarray*}
For $\alpha>1$ we have $$\lim_{N \rightarrow \infty}\frac{\alpha(6 \log N)^{1/\alpha}}{\log N}=0.$$ Hence, for every $\delta>0$ there exists a $c_{\delta}>0$ such that $$D_{N,\bsgamma}^{\ast}(\mathcal{H}_{b_1,\ldots,b_d}) \le \frac{c_{\delta}}{N^{1-\delta}}.$$ 

In order to study the order of magnitude of $c_\delta$ we write $N={\rm e}^x$. Then
$$D_{N,\bsgamma}^{\ast}(\mathcal{H}_{b_1,\ldots,b_d}) \le \left(\frac{2 {\rm e}^2}{\pi}\right)^{\alpha/2} \frac{1}{\sqrt{6x}} {\rm e}^{\alpha (6 x)^{1/\alpha}-\delta x} \frac{1}{N^{1-\delta}} \le \left(\frac{2 {\rm e}^2}{\pi}\right)^{\alpha/2} \frac{1}{\sqrt{6}} \max_{x \ge 1} {\rm e}^{\alpha (6 x)^{1/\alpha}-\delta x} \frac{1}{N^{1-\delta}}.$$
Now $$\max_{x \ge 1} {\rm e}^{\alpha (6 x)^{1/\alpha}-\delta x}={\rm e}^{\alpha (6 x_0)^{1/\alpha}-\delta x_0} \ \ \ \mbox{where $x_0={\rm e}^\frac{\log 6 -\alpha \log \delta}{\alpha -1}$.}$$ Hence $$c_{\delta} = \left(\frac{2 {\rm e}^2}{\pi}\right)^{\alpha/2} \frac{1}{\sqrt{6}}{\rm e}^{\alpha (6 x_0)^{1/\alpha}-\delta x_0}$$ which is again astronomically large especially when $\alpha$ is close to 1.\\

The same problem appears with the $C_{\delta}$ in \eqref{bd_di_ha2}. Note that the convergence of $\delta^*(N)$ to $1/2$ for $N \rightarrow \infty$ is very very slow. For example, in order to have $\delta^*(N) < 1$ we require $N \approx 100.000.000$. This leads to an astronomically large value for the constant $C_\delta$ in the discrepancy bound \eqref{bd_di_ha2}.

\paragraph{Open problem.} Improve the given discrepancy bounds with respect to the involved constants.

\paragraph{Acknowledgment.} We thank \'{I}sabel Pir\v{s}i\'{c} for valuable discussions concerning the computational complexity of an efficient computation of the van der Corput sequence.

\iffalse
\begin{small}
\noindent\textbf{Authors' address:} Friedrich Pillichshammer, Institut f\"{u}r Finanzmathematik und Angewandte Zahlentheorie, Johannes Kepler Universit\"{a}t Linz, Altenbergerstr.~69, 4040 Linz, Austria.\\
\textbf{E-mail:} \texttt{friedrich.pillichshammer@jku.at}
\end{small}
\fi

\end{document}